\newcommand{\del}[1]{\partial#1}
\newcommand{\delb}[1]{\overline\partial#1}
\newtheorem{hyp}[defn]{Hypothesis}
\title{On Chow stability and balanced embeddings}
\author{Ho Leung Fong}
\begin{document}
\maketitle

\begin{abstract}
An important result of Zhang states that for a projective variety, the existence of a balanced embedding is equivalent to Chow stability. In this paper, we shall prove that Chow stability implies that a balanced embedding exists via the continuity method. Our proof is conditional on a technical hypothesis about restrictions of Hamiltonians to subschemes of projective space.
\end{abstract}

\section{Introduction}
A projective variety has many embeddings into projective space. It is thus natural to ask: is there a ``best possible'' such embedding? Zhang gave a precise answer to this question in 1996, where ``best possible'' is interpreted through certain integrals over the projective variety. Briefly, we say the embedding $X\hookrightarrow \mathbb P^n$ is balanced if \[\int_X\frac{z_i\bar z_j}{|z|^2}\omega_{FS}^m=\lambda\,\id\] for some $\lambda\in \C$, where the $z_j$ are the homogeneous coordinates and $\omega_{FS}$ is the Fubini-study metric. Ultimately these embeddings do not always exist, and Zhang characterised their existence through an algebro-geometric notion, called Chow stability. Zhang's proof in \cite{zhang_1996} used the variational techniques. There are several new proofs in \cite{luo_1998},\cite{phong_sturm_2004}, and \cite{wang_2004}. The aim of this paper is to give a more geometric proof that should give some more insight into the problem.

The theorem we shall prove is the following:
\begin{thm}
	\label{thm:main}
	Let $X\subset \mathbb P^n$ be a smooth projective variety of dimension $m$. Assume $X$ is not contained in any hyperplane and has discrete automorphism group. If $X$ is Chow stable, then it admits a balanced embedding.
\end{thm}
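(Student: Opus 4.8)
The plan is to recast the balancing condition as the vanishing of a moment map and to produce a zero of it by the continuity method. Write $V=H^0(X,\mathcal O_X(1))^{*}$, so that $X\subset\mathbb P^n=\mathbb P(V)$, and let $\mathcal B$ be the space of positive Hermitian forms of determinant one on $V$, a symmetric space for $SL(V)$; a point $H\in\mathcal B$ selects a new unitary frame of $V$, hence new homogeneous coordinates $z$ and a new Fubini--Study metric $\omega_H$. Put
\[
\mu(H)_{ij}=\frac{n+1}{\operatorname{vol}(X)}\int_X\frac{z_i\bar z_j}{|z|_H^{2}}\,\frac{\omega_H^{m}}{m!}-\delta_{ij},
\]
the trace-free Hermitian matrix measuring the failure of the $H$-embedding to be balanced. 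By Zhang's work, reinterpreted in \cite{phong_sturm_2004} and \cite{wang_2004}, $\mu$ is the gradient on $\mathcal B$ of the Chow--norm functional $\mathcal N$ (the Kempf--Ness function of the Chow point $\widehat X$ of $X$), which is geodesically convex; its slope along the geodesic ray generated by a trace-free Hermitian matrix $\xi$ is $\operatorname{vol}(X)^{-1}\int_X H_\xi\,\omega_H^m/m!$, where $H_\xi$ is the Hamiltonian of $\xi$ on $(\mathbb P^n,\omega_H)$, and the limit of this slope as the geodesic parameter $s\to\infty$ is the Chow weight $\mu^{\mathrm{Chow}}(X,\lambda_\xi)$ of the one-parameter subgroup $\lambda_\xi$ generated by $\xi$. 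Hence $H$ defines a balanced embedding if and only if $\mu(H)=0$, equivalently $H$ minimises $\mathcal N$; Chow stability is precisely the statement that all of these weights are strictly positive.

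For the continuity method, fix a reference $H_0\in\mathcal B$, let $P(H)=\operatorname{dist}_{\mathcal B}(H_0,H)^2$ (smooth, proper, and strongly geodesically convex since $\mathcal B$ is a Hadamard manifold), and set $\mathcal N_t=\mathcal N+(1-t)P$ for $t\in[0,1]$; a critical point of $\mathcal N_t$ solves $\mu_t(H):=\mu(H)+(1-t)\,\nabla P(H)=0$, which at $t=1$ is the balancing equation. Since $\mathcal N$ is Lipschitz along geodesics (the weights of $\xi$ are $O(|\xi|)$) while $P$ grows quadratically, $\mathcal N_0=\mathcal N+P$ is proper and attains its minimum, so the set $S\subseteq[0,1]$ of parameters for which $\mathcal N_t$ has a critical point is nonempty; by convexity such a critical point is the unique minimiser of $\mathcal N_t$. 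Openness of $S$ is the implicit function theorem: at a critical point $\mathrm D\mu_t$ equals the Hessian of $\mathcal N_t$, which is positive definite --- for $t<1$ because $P$ contributes a positive term and $\mathcal N$ a nonnegative one, and for $t=1$ because the kernel of the Hessian of $\mathcal N$ at a critical point is the Lie algebra of the stabiliser of $\widehat X$ in $SL(V)$, and this stabiliser is finite since $X$ is not contained in a hyperplane and $\mathrm{Aut}(X)$ is discrete.

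Closedness of $S$ is where the argument becomes conditional. Suppose $t_k\to t_\infty\in\overline S$ with minimisers $H_k$ of $\mathcal N_{t_k}$; we must keep the $H_k$ in a fixed compact subset of $\mathcal B$. If not, after passing to a subsequence the $H_k$ escape along a geodesic ray whose direction $\xi$ we may take rational, so it generates a one-parameter subgroup $\lambda_\xi$ of $SL(V)$ with flat limit cycle $X_0=\lim_{s\to\infty}\lambda_\xi(e^{-s})\cdot X$, in general non-reduced. From $\mu(H_k)=-(1-t_k)\,\nabla P(H_k)$ together with the minimality of $H_k$ one reads off that the slope of $\mathcal N$ along the ray in direction $\xi$ is eventually non-positive, so by convexity $\lim_{s\to\infty}\tfrac{d}{ds}\mathcal N\le 0$, i.e.\ $\mu^{\mathrm{Chow}}(X,\lambda_\xi)\le 0$ with $\xi\neq 0$ --- contradicting Chow stability. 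The delicate step, and the one for which we must invoke the technical hypothesis, is the identification of $\lim_{s\to\infty}\operatorname{vol}(X)^{-1}\int_X H_\xi\,\omega_H^m/m!$ with $\mu^{\mathrm{Chow}}(X,\lambda_\xi)$ when $X_0$ is non-reduced: it reduces to controlling the restriction of the Hamiltonian $H_\xi$ to the subschemes of $\mathbb P^n$ occurring as flat limits. Granting this, the $H_k$ are bounded, a subsequence converges to a minimiser of $\mathcal N_{t_\infty}$, so $S$ is closed; then $S=[0,1]$, and the minimiser at $t=1$ gives the desired balanced embedding. I expect this closedness step --- converting Chow stability into compactness of the continuity path, especially in the presence of non-reduced degenerations --- to be the principal obstacle.
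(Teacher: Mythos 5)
Your closedness step contains the genuine gap, and it is exactly where the content of the theorem lives. When the minimisers $H_k$ escape to infinity you extract a limiting geodesic direction $\xi$, which is an arbitrary trace-free Hermitian matrix; Chow stability, however, is a statement only about \emph{algebraic} one-parameter subgroups, i.e.\ about $\xi$ with commensurable eigenvalues. Your phrase ``whose direction $\xi$ we may take rational'' is not something you get to choose --- the sequence hands you whatever direction it converges to --- and bridging irrational destabilising directions to genuine one-parameter subgroups is precisely the hard direction of the Kempf--Ness/Hilbert--Mumford correspondence. In this finite-dimensional setting the gap is fixable (the asymptotic slope of $\log\|e^{s\xi}\cdot\widehat X\|$ is $\max_\chi\langle\chi,\xi\rangle$ over the finite set of integral torus-weights appearing in the Chow point, so the cone of directions with non-positive slope is rational polyhedral and, if nonzero, contains a rational point), but as written the step is asserted rather than proved. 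Separately, you have attached the paper's conditional hypothesis to the wrong place: the identification of the asymptotic slope with the Chow weight of the flat limit (even a non-reduced one) is a cited result (\cite[Lemma 7.21]{szekelyhidi_2014}), whereas Hypothesis \ref{hyp:hash} is the nondegeneracy statement that $h_\xi$ constant on a subscheme forces $\xi=0$; the paper needs it to invert the linearised operator and to prove reductivity of the stabiliser of the degenerate limit, neither of which appears in your argument.

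It is also worth noting that your continuity path does essentially no work. Since $\nabla\mathcal N$ is uniformly bounded, $\mathcal N_t=\mathcal N+(1-t)P$ is proper and strictly convex for every $t<1$, so $[0,1)\subseteq S$ for free and the whole argument collapses to the single claim that $\mathcal N$ itself is proper (equivalently, attains its minimum) --- that is, to the Kempf--Ness theorem for the Chow point. This is the variational proof of Zhang, Phong--Sturm and Wang, which the paper explicitly sets out to avoid. The paper's continuity method is genuinely different: it perturbs the balanced \emph{equation} by adding $it\int_{gD}z_i\bar z_j/|z|^2$ for a GIT-stable configuration $D\subset X$ of $n+2$ points in general position (which is balanced on its own, giving the starting point at large $t$), runs $t$ downward to $0$, and proves closedness algebro-geometrically: flat limits $X_{min},D_{min}$ in the nested Hilbert scheme, reductivity of $Aut(X_{min},D_{min})$ via Hypothesis \ref{hyp:hash}, Donaldson's realisation of the limit by a one-parameter subgroup, and finally Chow stability to force $X_{min}=X$.
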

Rather than variational techniques, we use a continuity method. Continuity methods are frequently used in studying the Yau-Tian-Donaldson conjecture, which relates K-stability to the existence of constant scalar curvature Kahler metrics, and which can be seen as an infinite dimensional analogue of our finite dimensional problem. There are many examples, such as \cite{chen_donaldson_sun_2014}, \cite{aoi2021uniform}, and \cite{chen_cheng_2021}. It is thus natural to expect that we can apply the continuity method to our problem. 

A key tool in our work will be to relate the vanishing of Hamiltonians on certain subschemes of projective space to their vanishing on projective space itself, which we make a hypothesis. This property is, for example, key part of the proof that the existence of balanced embeddings implies Chow stability. The hypothesis holds for smooth subvarieties by Lempert \cite{lempert2021bergman}.

\section*{Acknowledgements}
I would like to express my sincere gratitude to my supervisor Dr Ruadhaí Dervan for patiently answering my questions and suggesting the problem.
I would also like to thank Dr Yoshinori Hashimoto for helpful conversations.
This paper is funded by the Faculty of Mathematics in the University of Cambridge and a summer studentship associated to Ruadhaí Dervan's Royal Society University Research Fellowship.
\section{Preliminaries}
Let $X\subset\mathbb P^n$ be a projective variety of dimension $m$.
Define the Fubini-Study metric by $\omega_{FS}=i\del\delb \log|s|^2$, where $s$ is a local section of the projection map $\C^{n+1}\setminus \{0\}\to \mathbb P^n.$
\begin{defn*}[Balanced embedding]
	We say $X\hookrightarrow \mathbb P^n$ is balanced if \[\int_X\frac{z_i\bar z_j}{|z|^2}\omega_{FS}^m=\lambda\,\id\] for some $\lambda\in\C$. 
\end{defn*}
By taking the trace, we know that $\lambda$ must equal $\frac{1}{n+1}\int_X\omega_{FS}^{m}.$
Define \[F(g)_{ij}=i\int_{gX}\frac{z_i\bar z_j}{|z|^2}\omega_{FS}^m-\lambda\,\id\] for $g\in GL_{n+1}(\C)$, where $\lambda=\frac{i}{n+1}\int_X \omega_{FS}^m$.
We say that $X$ admits a balanced embedding if there exists a $g\in GL_{n+1}(\C)$ such that $F(g)=0$.

Note that $SU(n+1)$ acts on $\mathbb P^n$.
This induces a map $su(n+1)\to \text{Vect}(\mathbb P^n)$.
Denote the image of $\xi\in su(n+1)$ by $\underline \xi$.
Define 
\begin{equation}
	\label{eq:h}
h_{\xi}(z)=-i\sum_{j,k\ge 0}\frac{\xi_{kj}z_j\bar z_k}{|z|^2}
\end{equation}
for $\xi\in su(n+1)$ and $z\in \mathbb P^n$.
Then $h_\xi$ is a Hamiltonian for the action (c.f. \cite[Example 5.5]{szekelyhidi_2014}).
In other words, \[dh_{\xi}=-\iota_{\underline \xi}\omega_{FS}.\]

We want to investigate whether $X$ admits a balanced embedding.
It turns out that it is related to the notion of Chow stability.
Given a $\C^*$-action $\C^*\hookrightarrow SL_{n+1}(\C)$, there is an induced $\C^*$-action on the homogeneous coordinate ring $R=\C[x_0,\dots,x_n]/I_0$, where $I_0$ is the homogeneous ideal corresponding to the flat limit $X_0$.
Let $A_k$ be the generator of the $\C^*$-action on the degree $d$ part $R_k$ of $R$.
Then the total weight of the action is $w_k=\Tr(A_k)$.
For large $k$, $\Tr(A_k)$ is a degree $m+1$ polynomial. 
Denote the leading coefficient by $b_0$.
\begin{defn*}[Chow Stability]
	We call $X$ Chow stable if for all 1-parameter subgroup $\C^*\hookrightarrow SL_{n+1}(\C)$, \[b_0>0\text{ or }X_0=X.\]
\end{defn*}
This is not the usual definition of Chow stability, which is a notion of stability in the sense of geometric invariant theory, but is equivalent through the Hilbert-Mumford criterion \cite[Theorem 5.17]{szekelyhidi_2014}.

By \cite[Lemma 7.21]{szekelyhidi_2014}, \[b_0=\int_{X_0}h\frac{(\frac{1}{2\pi}\omega_{FS})^m}{m!}.\]
Thus, $X$ is Chow stable if and only if for all 1-parameter subgroup $\C^*\hookrightarrow SL_{n+1}(\C)$, \[\int_{X_0}h\omega_{FS}^m>0 \text{ or }X_0=X.\]

We shall assume the following hypothesis throughout the paper:
\begin{hyp}
	\label{hyp:hash}
	Define $h_\xi$ as in equation \eqref{eq:h}, and let $X \subset \mathbb P^n$ be a scheme.
	If $h_\xi$ is constant on $X$, then $\xi=0.$
\end{hyp}

Clearly some hypotheses are needed for this to be true, such as $X$ not being contained in any hyperplane, but as this is the condition we need, we simply make this a hypothesis. We will use this both for our smooth projective variety $X$ and also certain degenerations of $X$, which may be singular. 
As explained to us by Hashimoto, in the smooth case this hypothesis is equivalent to the injectivity of the Fubini-Study map. Indeed, by writing $i\xi$ as a difference of two strictly positive hermitian matrices $H_1 , H_2$, we get $h\xi =FS(H_1) - FS(H_2)$, where the notation follows \cite[Lemma 10, Equation (10)]{hashi2020}. For smooth projective varieties, it is proved in \cite{lempert2021bergman}. It will be important to use this also for singular subschemes of projective space.

\section{Proof of main result}

We shall prove theorem \eqref{thm:main} by the continuity method.
Let \[F_t(g)_{ij}=i\int_{gX}\frac{z_i\bar z_j}{|z|^2}\omega_{FS}^m+it\int_{gD}\frac{z_i\bar z_j}{|z|^2}-\lambda_t\,\id,\] where $\lambda_t=(1+t)\lambda$ and $D$ is a GIT stable finite point set contained in $X$. Such a $D$ exists by Lemma \ref{lem:stablept} if $X$ has $n+2$ points in general position.
Let \[I=\{t\in [0,\infty):\exists g\in GL_{n+1}(\C)\text{ such that }F_t(g)=0\}.\]
Our strategy is to first show that $I\cap (0,\infty)$ is open by the implicit function theorem.
Then we shall show that $I$ is non-empty by proving that for sufficiently large $t$, there exists $g_t$ such that $F_t(g_t)=0$.
Finally, we shall show that if $(t_j)_{j=1}^{\infty}$ is  decreasing sequence in $I$, then $\lim t_j\in I,$ assuming Chow stability of $X$.
This will allow us to conclude that $0\in I$, which implies that $X$ admits a balanced embedding.

Recall a set consisting of $n+2$ points in $\mathbb P^n$ is said to be in general position if any $n+1$ points are linearly independent.

\begin{lem}\label{lem:genpos}
	$X$ has a subset $D$ consisting of $n+2$ points in general position.
\end{lem}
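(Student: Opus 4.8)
The plan is a greedy, point-by-point construction relying only on the facts that $X$ is irreducible and is not contained in any hyperplane. First I would produce a linearly independent $(n+1)$-tuple $p_1,\dots,p_{n+1}\in X$ by induction. Suppose $p_1,\dots,p_k$ with $k\le n$ have been chosen. Their linear span has projective dimension at most $k-1\le n-1$, so it is contained in some hyperplane $H$; since $X\not\subset H$ by hypothesis, there is a point of $X$ outside this span, and I take it to be $p_{k+1}$. (The points are automatically distinct, since $p_{k+1}$ lies outside the span of $p_1,\dots,p_k$.) After $n+1$ steps this yields $p_1,\dots,p_{n+1}\in X$ that are linearly independent as vectors in $\mathbb C^{n+1}$.

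Next I would choose the last point $p_{n+2}$. For each $i\in\{1,\dots,n+1\}$, let $H_i$ be the linear span of $\{p_j:j\neq i,\ 1\le j\le n+1\}$; since $p_1,\dots,p_{n+1}$ are independent, $H_i$ is a hyperplane, and $p_i\notin H_i$, whence $X\not\subset H_i$. Because $X$ is irreducible, each $X\cap H_i$ is a proper closed subvariety of $X$, and therefore so is the finite union $\bigcup_{i=1}^{n+1}(X\cap H_i)$. Choosing $p_{n+2}$ to be any point of $X$ not in this union completes the construction of $D=\{p_1,\dots,p_{n+2}\}$.

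It then remains to verify the general position condition, i.e.\ that every $(n+1)$-element subset of $D$ is linearly independent. Such a subset is either $\{p_1,\dots,p_{n+1}\}$ itself, which is independent by construction, or else of the form $D\setminus\{p_i\}$ for some $i$; the latter consists of the $n$ independent vectors $\{p_j:j\neq i,\ j\le n+1\}$ together with $p_{n+2}$, which lies outside their span $H_i$, and adjoining to an independent set a vector outside its span preserves independence. Hence $D$ is in general position. There is no essential obstacle in this argument; the only two points worth a moment's care are that the span of at most $n$ points of $\mathbb P^n$ is always contained in a hyperplane (so that the hypothesis ``$X$ is contained in no hyperplane'' can be applied at each inductive step and to each $H_i$), and that irreducibility of $X$ is precisely what ensures the finite union $\bigcup_i (X\cap H_i)$ does not exhaust $X$.
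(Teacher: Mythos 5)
Your proof is correct, and it uses exactly the same two ingredients as the paper's (that $X$ is contained in no hyperplane, and that an irreducible variety cannot be covered by finitely many proper closed subsets), but it is organised differently: you run a direct greedy induction, whereas the paper picks a maximal general-position subset $D$ and derives a contradiction from $X\subset\bigcup_i H_i$. Your version has two small advantages. First, it makes visible that irreducibility is only needed for the final point $p_{n+2}$ (avoiding the finite union of hyperplane sections), while the first $n+1$ points need only the no-hyperplane hypothesis. Second, it sidesteps a minor imprecision in the paper's extension step: for $k\le n$ the paper adjoins a point outside $\bigcup_i H_i$ where $H_i=\mathrm{span}(D\setminus\{p_i\})$, but to keep $k+1\le n+1$ points in general position one actually needs the new point outside $\mathrm{span}(D)$ itself (which of course also exists, since $\mathrm{span}(D)$ lies in a hyperplane); your induction uses the correct span at each stage. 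Both arguments are otherwise equivalent in substance and length.
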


\begin{proof}
	Assume this is false. Let $D=\{p_1,\dots,p_k\}$ be a subset of $X$ consisting of points in general position with the largest $k$.
	Then $1\le k \le n+1$.
	Let $H_i$ be the span of $D\setminus\{p_i\}$.
	Then \[X\subset\bigcup_{i=1}^k H_i.\] Indeed, if $p\in X\setminus \cup_{i=1}^k H_i$, then $\{x\}\cup D$ is a set consisting of points in general position. This contradicts the choice of $D$.

Since $X$ is irreducible, $X\subset H_i$ for some $i$.
Note that $H_i$ is a plane with dimension at most $n-1$.
This contradicts that $X$ is not contained in any hyperplane.
\end{proof}

We next show that such a set $D$ has the desired properties.
\begin{lem}
\label{lem:stablept}
Let $D$ be a set consisting of $n+2$ points in general position in $\mathbb P^n$. Then 
\begin{enumerate}[(i)]
	\item $D$ is a GIT stable point set;
	\item $D$ admits a balanced embedding.
\end{enumerate}
\end{lem}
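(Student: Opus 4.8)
The plan is to treat the two parts with classical invariant-theory input. For part (i) I would invoke the numerical criterion for GIT stability of configurations of points in $\mathbb P^n$ under the $SL_{n+1}(\C)$-action: a configuration of $N$ points is stable if and only if every proper linear subspace $\mathbb P^k\subsetneq\mathbb P^n$ contains strictly fewer than $\frac{(k+1)N}{n+1}$ of the points (this is standard; see e.g. Mumford's GIT book or Dolgachev's lectures on invariant theory). For our $D$ the only thing to check is a count: the general position hypothesis implies that any $j\le n+1$ of the $n+2$ points are linearly independent --- a $j$-element subset extends to an $(n+1)$-element subset, which is independent by hypothesis --- and therefore span a $\mathbb P^{j-1}$. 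Hence a proper subspace $\mathbb P^k$ (so $k\le n-1$) contains at most $k+1$ of the $n+2$ points, and $k+1<\frac{(k+1)(n+2)}{n+1}$, so the criterion holds and $D$ is stable.

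For part (ii) I would exhibit a balanced representative directly. Recall that an embedding of $D$ is balanced exactly when $\sum_{p\in D}\frac{p_i\overline{p_j}}{|p|^2}=\frac{n+2}{n+1}\,\id$. Let $u_1,\dots,u_{n+2}\in\mathbb R^{n+1}\subset\C^{n+1}$ be the vertices of a regular $(n+1)$-simplex inscribed in the unit sphere, so $|u_k|=1$, $\sum_k u_k=0$ and $\langle u_i,u_j\rangle=-\frac1{n+1}$ for $i\ne j$. The matrix $\sum_k u_ku_k^{*}$ commutes with the standard (irreducible) representation of $S_{n+2}$ permuting the $u_k$, hence is a scalar by Schur's lemma, and comparing traces gives $\sum_k u_ku_k^{*}=\frac{n+2}{n+1}\,\id$; that is, $\{[u_1],\dots,[u_{n+2}]\}$ is balanced. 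It is in general position because the Gram matrix of any $n+1$ of the $u_k$ is $\frac{n+2}{n+1}\,\id-\frac1{n+1}J$, with $J$ the all-ones matrix, which is invertible. Finally, any two $(n+2)$-point configurations in general position are projectively equivalent: sending $n+1$ of the points to a basis of $\C^{n+1}$ by an element of $GL_{n+1}(\C)$ and then rescaling the basis, the remaining point --- which has all coordinates nonzero, by general position --- can be normalised to any prescribed one. Hence there is $g\in GL_{n+1}(\C)$ with $gD=\{[u_1],\dots,[u_{n+2}]\}$, so $gD$ is balanced. Alternatively, granting (i), part (ii) follows from the Kempf--Ness theorem, since the balanced condition is precisely the vanishing of the moment map for the $SU(n+1)$-action on point configurations and a GIT-stable point has a closed orbit meeting $\mu^{-1}(0)$.

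I do not expect a serious obstacle in either part. Part (i) is a direct application of the standard stability criterion, the only work being the elementary bound on how many points of a general-position set lie in a proper subspace. In part (ii) the one conceptual point is recognising the balanced condition as the vanishing of the $SU(n+1)$ moment map; after that, either the explicit regular-simplex configuration --- where Schur's lemma produces the multiple of the identity with no coordinate computation --- or the Kempf--Ness theorem finishes the proof, while the projective equivalence of general-position configurations is classical.
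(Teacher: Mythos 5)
Your proof is correct, and part (i) is essentially identical to the paper's: both invoke the numerical criterion that a configuration of $n+2$ points in $\mathbb P^n$ is stable iff every proper linear subspace $P$ contains fewer than $\frac{(n+2)(\dim P+1)}{n+1}$ of them, and both reduce this to the observation that general position forces at most $\dim P+1$ points into $P$. For part (ii) your argument has the same skeleton as the paper's --- exhibit one explicit balanced configuration in general position, then use the projective equivalence of general-position $(n+2)$-point sets to move $D$ onto it --- but the configuration and the verification differ. The paper takes $v_b=[1:\zeta^b:\cdots:\zeta^{nb}]$ with $\zeta=e^{2\pi i/(n+2)}$, gets general position from the Vandermonde determinant, and checks the moment map vanishes by summing a geometric series of roots of unity; you take the vertices of a regular simplex and let Schur's lemma (applied to the standard representation of $S_{n+2}$) produce the multiple of the identity without any coordinate computation. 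Your version is arguably more conceptual; the paper's is more self-contained. You are also more careful than the paper on the change-of-coordinates step, which the paper asserts in one line without noting that the residual point has all coordinates nonzero. Your closing alternative via Kempf--Ness is a genuinely different (and valid) route: it derives (ii) from (i) abstractly, at the cost of importing the finite-dimensional Kempf--Ness theorem, whereas both explicit constructions keep the lemma elementary --- which matters here, since the paper is in the business of reproving the Zhang-type correspondence rather than assuming it.
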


\begin{proof}
	(i) By \cite[Proposition 7.27]{mukai_2003}, a collection of $n+2$ points in $\mathbb P^n$ is stable if and only if for all (proper) projective subspace $P\subset \mathbb P^n$, 
	\[\sharp P\cap D <\frac{n+2}{n+1}(\dim P+1)=\dim P+1+\frac{\dim P+1}{n+1}.\]
	For $\dim P<n$, this is equivalent to \[\sharp P\cap D \le \dim P+1.\]
	This is true if the $n+2$ points are in general position.

	(ii) Let $\zeta=e^{2\pi i/(n+2)}, v_b=[1:\zeta^b:\zeta^{2b}:\dots:\zeta^{nb}].$
	Then $E=\{v_0,v_1,\dots,v_{n+1}\}$ is a point set in $\mathbb P^n$ where any $n+1$ points are linearly independent by the properties of the Vandermonde matrix.

	By changing coordinates of $\mathbb P^n$, we can without loss of generality assume that $D=E$.
	We shall now show that $E$ is stable.
	Note that the $(a,b)$-th entry of the value of the moment map at $E$, where $0\le a \le n, 0\le b\le n+1$, is 
	\[
		\sum_{c=0}^{n+1}\frac{\zeta^{ac-bc}}{n+1}=
		\begin{cases}
			0 &\text{ if }a\neq b\\
			\frac{n+2}{n+1} &\text{ if }a=b.
		\end{cases}
	\]
	Thus, $E\hookrightarrow \mathbb P^n$ is a balanced embedding.
\end{proof}

From now on, we fix a subset $D$ of $X$ consisting of $n+2$ points in general position. This exists by Lemma \ref{lem:genpos} and has the properties in Lemma \ref{lem:stablept}. 

Let $J$ be the complex structure on $\mathbb P^n$.
We shall now prove a standard result regarding the Lie derivative of $\omega_{FS}.$

\begin{lem}\label{lem:lieomega}
	$\mathcal L_{\underline {iu}}\omega_{FS}=-2i\del\delb h_u$
\end{lem}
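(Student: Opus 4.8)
The plan is to express the Lie derivative in terms of the exterior derivative using Cartan's magic formula, then exploit the Hamiltonian property recorded earlier in the excerpt, namely $dh_u = -\iota_{\underline u}\omega_{FS}$, together with the relation between the real vector field $\underline{iu}$ and $J\underline u$. First I would recall that $\underline{iu} = J\underline u$, since the infinitesimal action of $SU(n+1)$ is by holomorphic vector fields and multiplication by $i$ on the Lie algebra corresponds to applying the complex structure to the generated vector field on $\mathbb P^n$. Then Cartan's formula gives $\mathcal L_{\underline{iu}}\omega_{FS} = d\,\iota_{\underline{iu}}\omega_{FS} + \iota_{\underline{iu}}\,d\omega_{FS}$, and since $\omega_{FS}$ is closed the second term drops, leaving $\mathcal L_{\underline{iu}}\omega_{FS} = d\,\iota_{J\underline u}\omega_{FS}$.

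Next I would compute $\iota_{J\underline u}\omega_{FS}$. Using that $\omega_{FS}(\cdot,\cdot) = g(J\cdot,\cdot)$ for the Fubini-Study metric $g$, one gets $\iota_{J\underline u}\omega_{FS} = g(J\,J\underline u, \cdot) = -g(\underline u,\cdot)$, whereas $\iota_{\underline u}\omega_{FS} = g(J\underline u,\cdot)$; equivalently $\iota_{J\underline u}\omega_{FS} = -J^*(\iota_{\underline u}\omega_{FS})$ where $J^*$ acts on $1$-forms. Combining with the Hamiltonian identity $\iota_{\underline u}\omega_{FS} = -dh_u$, this yields $\iota_{J\underline u}\omega_{FS} = J^*dh_u = d^c h_u$, with the convention $d^c = J^* d$ (or $d^c = i(\delb - \del)$ up to the normalisation the paper uses, so that $dd^c = 2i\del\delb$). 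Taking $d$ of both sides then gives $\mathcal L_{\underline{iu}}\omega_{FS} = d\,d^c h_u = 2i\del\delb h_u$, and the claimed sign $-2i\del\delb h_u$ comes out after carefully tracking the orientation/sign convention in the definition $\omega_{FS} = i\del\delb\log|s|^2$ and the sign in $dh_\xi = -\iota_{\underline\xi}\omega_{FS}$.

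The main obstacle I anticipate is purely one of sign and normalisation bookkeeping rather than any conceptual difficulty: the paper carries several potentially clashing conventions — the factor $i$ in $\omega_{FS} = i\del\delb\log|s|^2$, the sign in $dh_\xi = -\iota_{\underline\xi}\omega_{FS}$, the identification $\underline{iu} = J\underline u$ versus $-J\underline u$, and the choice $dd^c = \pm 2i\del\delb$. I would fix these once and for all by testing on the standard model, e.g. $\mathbb P^1$ with $u$ a diagonal generator, computing $h_u$ explicitly from \eqref{eq:h} and comparing both sides numerically at a point; this pins down every sign and confirms the factor of $2$. Once the conventions are locked, the computation above is a three-line verification, so the lemma follows immediately.
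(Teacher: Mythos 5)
Your proposal follows essentially the same route as the paper's proof: Cartan's formula with $d\omega_{FS}=0$, the identification $\underline{iu}=J\underline u$, the Hamiltonian identity $\iota_{\underline u}\omega_{FS}=-dh_u$, and then $\iota_{\underline{iu}}\omega_{FS}=-J(\iota_{\underline u}\omega_{FS})=J(dh_u)$, the only difference being that the paper carries this out in local coordinates rather than via $d^c$. The one loose end in your write-up is the final sign, which you defer to a model computation but which is pinned down immediately by the convention $(J\alpha)(\cdot)=\alpha(J\cdot)$: this gives $J(dh_u)=i(\del-\delb)h_u$, whence $d\,\iota_{\underline{iu}}\omega_{FS}=i(\del+\delb)(\del-\delb)h_u=-2i\del\delb h_u$ as claimed.
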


\begin{proof}
	Let $\underline u=\sum_{j\ge 1}(a_j \frac{\del}{\del w_j}+\bar a_j \frac{\del}{\del \bar w_j}), \omega_{FS}=\sum_{j,k\ge 1}ig_{jk}dw_j\wedge d\bar w_k.$
	Then $\underline{iu}=J\underline u=\sum_{j\ge 1}i(a_j \frac{\del}{\del w_j}-\bar a_j \frac{\del}{\del \bar w_j})$.
	Also, \[
		\iota_{\underline u}\omega_{FS}=\sum_{j,k\ge 1}ig_{jk}(a_jd\bar w_k-\bar a_k dw_j)=-dh_u
	\]
	so
	\begin{align*}
		\iota_{\underline {iu}}\omega_{FS}=\sum_{j,k\ge 1}g_{jk}(-a_jd\bar w_k-\bar a_k dw_j)j
		&=-J(\iota_{\underline u}\omega_{FS})\\
		&=J(dh_u)\\
		&=i(\del-\delb)h_u.
	\end{align*}

	By Cartan's magic formula, we have 
	\begin{align*}
		\mathcal L_{\underline {iu}}\omega_{FS}&=d\iota_{\underline {iu}}\omega_{FS}\\
		&=i(\del+\delb)(\del-\delb)h_u\\
		&=-2i\del\delb h_u.
	\end{align*}
\end{proof}

We shall now show that $I\cap (0,\infty)$ is open.
\begin{prop}
	The set $I\cap (0,\infty)$ is open.	
\end{prop}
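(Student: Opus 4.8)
The plan is to apply the implicit function theorem to the map $F_t$ at a point $(t_0, g_0)$ with $t_0 > 0$ and $F_{t_0}(g_0) = 0$. First I would reduce to a convenient normalization: since $F_t$ transforms equivariantly under the $SU(n+1)$-action (indeed under $U(n+1)$), and since $F_t(g) = 0$ is really a condition on the Hermitian part of $g^*g$, it suffices to work near $g_0 = \mathrm{id}$ after absorbing $g_0$ into the embedding $X \hookrightarrow \mathbb P^n$ and $D$. So I would replace $X$ by $g_0 X$ and $D$ by $g_0 D$, and study $F_{t_0}$ near the identity. The relevant derivative is then the derivative of $g \mapsto F_{t_0}(g)$ at $g = \mathrm{id}$ in the Hermitian directions, since the unitary directions are killed by the equivariance and only reparametrize.

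The key computation is to identify this linearization. Writing $g = \exp(\epsilon A)$ with $A$ Hermitian (so $iA \in su(n+1)$ up to trace), the infinitesimal change of the integral $\int_{gX} \frac{z_i \bar z_j}{|z|^2}\omega_{FS}^m$ is governed by the flow generated by the vector field $\underline{iu}$ where $u$ corresponds to $A$. Using Lemma \ref{lem:lieomega}, $\mathcal L_{\underline{iu}}\omega_{FS} = -2i\partial\bar\partial h_u$, together with the fact that $\frac{z_i\bar z_j}{|z|^2}$ is itself built from Hamiltonians, one computes that the derivative $DF_{t_0}|_{\mathrm{id}}$ is, up to a positive constant, the Hermitian operator $u \mapsto \int_X \{h_u, \text{coordinate functions}\}\,\omega_{FS}^m + t_0\sum_{p \in D}(\cdots)$, which can be rearranged into a manifestly self-adjoint, sign-definite pairing: its associated quadratic form is (a multiple of) $\int_X |\bar\partial h_u|^2_{\omega_{FS}}\,\omega_{FS}^m + t_0 \sum_{p\in D}|\bar\partial h_u(p)|^2$ or an analogous expression coming from integrating by parts. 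The point is that this quadratic form is nonnegative, and vanishes only if $\bar\partial h_u \equiv 0$ on $X$, i.e. $h_u$ is constant on $X$.

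At this point Hypothesis \ref{hyp:hash} does the work: $h_u$ constant on $X$ forces $u = 0$ (here we use that $X$ is not contained in a hyperplane, which is part of the standing hypotheses and is exactly the situation the Hypothesis is designed for). Hence $DF_{t_0}|_{\mathrm{id}}$ is injective on the space of Hermitian matrices; since source and target have the same dimension (the target being the Hermitian matrices with the trace normalization built into $\lambda_t$), it is an isomorphism. The automorphism group being discrete also enters here to ensure there is no kernel coming from holomorphic vector fields tangent to $X$; for finite point sets like $D$ there is no such issue, but one should check the combined operator on $X \cup$ data from $D$ is still injective — the $D$ term only adds a nonnegative contribution so it cannot destroy injectivity, and in fact it is strictly positive definite on its own by Lemma \ref{lem:stablept}(i) (stability of $D$). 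With $DF_{t_0}$ invertible, the implicit function theorem gives a neighborhood of $t_0$ in $I$, so $I \cap (0,\infty)$ is open.

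The main obstacle I anticipate is the linearization computation itself: carefully differentiating $F_t(g)$ in the Hermitian directions, handling the fact that $F_t$ takes values in matrices rather than scalars, and massaging the resulting expression into a recognizably self-adjoint, semidefinite form whose kernel is exactly $\{h_u \text{ constant on } X\}$. This requires combining Lemma \ref{lem:lieomega} with an integration by parts and keeping track of the contribution of the $t\int_{gD}$ term; the bookkeeping is where errors are most likely. A secondary subtlety is making precise the reduction to $g = \mathrm{id}$ and the identification of the correct domain and codomain of the linearized operator (the trace-normalization encoded in $\lambda_t$ must be matched on both sides for the dimension count to yield surjectivity from injectivity), but this is routine once set up correctly. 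Note also that we genuinely need $t_0 > 0$: at $t_0 = 0$ the $D$-term is absent and the argument for strict positivity would rely on $X$ alone; the role of $D$ is precisely to supply a definite perturbation keeping us in the stable regime along the path, so openness is only claimed on $(0,\infty)$.
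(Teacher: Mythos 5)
Your proposal follows essentially the same route as the paper: linearize $F_t$ in the directions $e^{isu}$, $u\in su(n+1)$, pair the derivative with $u$ to obtain a nonnegative quadratic form, and use Hypothesis \ref{hyp:hash} together with discreteness of $\mathrm{Aut}(X)$ and stability of $D$ to conclude injectivity, hence invertibility by dimension count and the implicit function theorem. The one correction to your guessed formula is that the $X$-contribution to the quadratic form is $\int_{gX}|(\mathrm{grad}\, h_u)^{\perp}|^2\,\omega_{FS}^m$ (the \emph{normal} component of the gradient), not $\int_X|\overline\partial h_u|^2$, so its vanishing only forces $\mathrm{grad}\, h_u$ to be tangent to $gX$; the discrete-automorphism assumption you mention only in passing is in fact the essential step that upgrades tangency to $h_u$ being constant on $X$, after which Hypothesis \ref{hyp:hash} gives $u=0$.
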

\begin{proof}
	We shall apply the implicit function theorem.
	Assume $t\in I\cap (0,\infty)$ and $g\in GL_{n+1}(\C)$ satisfy $F_t(g)=0$.
	Define 
	\begin{align*}
		G\colon su(n+1)\times (0,\infty)&\to su(n+1)\\
		(u,t)&\mapsto F_t(e^{iu}g).
	\end{align*}
	By the implicit function theorem, to prove the proposition, it suffices to show that \[dG_{(0,t)}(\cdot,0):su(n+1)\to su(n+1)\] is invertible.
	Equivalently, we need to show that for all $u\in su(n+1)\setminus\{0\}$, 
	\begin{align*}
		0&\neq dG_{(0,t)}(u,0)\\
		&=\frac{d}{ds}\bigg |_{s=0}G(su,t)\\
		&=\frac{d}{ds}\bigg|_{s=0} F_t(e^{isu}g).
	\end{align*}

	Note that 
	\begin{align*}
		\frac{d}{ds}\bigg |_{s=0}F(e^{isu}g)_{ij}&=i\frac{d}{ds}\bigg |_{s=0}\int_{gX}(e^{isu})^*\left( \frac{z_i\bar z_j}{|z|^2}\omega_{FS}^m \right)\\
		&=i\int_{gX}\mathcal L_{\underline {iu}}\left( \frac{z_i\bar z_j}{|z|^2}\omega_{FS}^m \right)\\
		&=i\int_{gX} \left[ \iota_{\underline{iu}}d\left( \frac{z_i\bar z_j}{|z|^2} \right)\omega_{FS}^m-2mi\frac{z_i\bar z_j}{|z|^2}(\del\delb h_u)\wedge \omega_{FS}^{m-1} \right]
	\end{align*}
	by Cartan's magic formula and Lemma \ref{lem:lieomega}.
	
	By properties of moment maps, 
	\begin{align*}
		\sum_{i,j\ge 0}\frac{d}{ds}\bigg |_{s=0}F(e^{isu}g)_{ij}u_{ji}&=
		\int_{gX}\left[ \omega_{FS}(\underline u,\underline{iu})\omega_{FS}^m+2mih_u (\del\delb h_u)\wedge \omega_{FS}^{m-1} \right]\\
		&=\int_{gX}\left[ \omega_{FS}(\underline u,J\underline{u})\omega_{FS}^m-2mi\del h_u \wedge \delb h_u\wedge \omega_{FS}^{m-1} \right]\\
		&=\int_{gX}\left[ |\textrm{grad} h_u|^2\omega_{FS}^m-2|\del h_u|_{gX}|^2\omega_{FS}^m \right]\\
		&=\int_{gX}|(\textrm{grad} h_u)^{\perp}|^2\omega_{FS}^m
	\end{align*}
	because $\underline u=J\textrm{grad} h_u$ and $|\del h_u|_{gX}|^2=\frac{1}{2}|\textrm{grad} h_u|_{gX}^2$. Here $(\textrm{grad} h_u)^{\perp}$ is the component of $\textrm{grad} h_u$ orthogonal to $gX$.

	Similarly, we get 
	\[
	\sum_{i,j\ge 0}\frac{d}{ds}|_{s=0} F_t(e^{isu}g)u_{ji}=\int_{gX}|(\textrm{grad} h_u)^{\perp}|^2\omega_{FS}^m+\int_{gD}|\textrm{grad} h_u|^2.
	\]

	Suppose that $\frac{d}{ds}|_{s=0} F_t(e^{isu}g)=0.$
	Then  \[0=(\textrm{grad} h_u)^{\perp}=\textrm{grad} h_u|_{gD}.\]
	By assumption, $X$ has discrete automorphism group.
	Thus, $\textrm{grad} h_u=0$ on $X$, so $h_u$ is constant on $X$.
	By Hypothesis \ref{hyp:hash}, this implies $u=0,$ as desired.
\end{proof}

By a similar argument with the implicit function theorem, we can show that $I$ is non-empty.
\begin{prop}
	The set $I$ is non-empty.
\end{prop}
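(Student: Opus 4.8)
The plan is to apply the implicit function theorem ``at $t=\infty$'', taking as base point the balanced embedding of $D$ supplied by Lemma \ref{lem:stablept}(ii). Concretely, I would reparametrise by $s=1/t$ and rescale $F_{1/s}$ by $s$; the resulting map is polynomial in $s$, namely
\[
\tilde F_s(g)_{ij}=si\int_{gX}\frac{z_i\bar z_j}{|z|^2}\omega_{FS}^m+i\int_{gD}\frac{z_i\bar z_j}{|z|^2}-(s+1)\lambda\,\id ,
\]
so it extends smoothly across $s=0$. At $s=0$ the $X$-term drops out and $\tilde F_0$ becomes (a multiple of) the balancing defect of the finite set $D$; normalising the point-mass term so that $\tilde F_s$ is genuinely $su(n+1)$-valued, the balanced embedding $g_0\in GL_{n+1}(\C)$ of $D$ from Lemma \ref{lem:stablept}(ii) satisfies $\tilde F_0(g_0)=0$. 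Thus $(u,s)=(0,0)$ is a zero of $(u,s)\mapsto\tilde F_s(e^{iu}g_0)$.

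I would then run the implicit function theorem on $\tilde G(u,s):=\tilde F_s(e^{iu}g_0)$ for $u\in su(n+1)$ and $s$ in a neighbourhood of $0$: if $d\tilde G_{(0,0)}(\cdot,0)\colon su(n+1)\to su(n+1)$ is invertible it produces $u(s)$ with $\tilde G(u(s),s)=0$ for all small $s$, and then $g_t:=e^{iu(1/t)}g_0\in GL_{n+1}(\C)$ satisfies $F_t(g_t)=0$ for all sufficiently large $t$. Hence $I$ contains an interval $(T,\infty)$ and is non-empty. To check the invertibility, since $su(n+1)$ is finite-dimensional it suffices to prove injectivity, and here the moment-map computation of the preceding proposition applies verbatim except that the integral over $gX$ is now multiplied by $s=0$ and disappears: pairing $d\tilde G_{(0,0)}(u,0)$ against $u$ gives a positive multiple of $\int_{g_0D}|\textrm{grad}\,h_u|^2$. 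So if $d\tilde G_{(0,0)}(u,0)=0$ then $\textrm{grad}\,h_u$ vanishes at each of the $n+2$ points of $g_0D$, and at such a point $p$ we get $\underline u(p)=J\,\textrm{grad}\,h_u(p)=0$; thus $p$ is an eigenvector of $u$ and hence is fixed in $\mathbb P^n$ by $e^{isu}$ for every $s$.

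To conclude injectivity I would use that $g_0D$ is a set of $n+2$ points in general position, i.e.\ a projective frame, and the only element of $PGL_{n+1}(\C)$ fixing a projective frame is the identity; therefore $e^{isu}$ is scalar for every $s$, so $u$ is a scalar matrix, so $u=0$ by tracelessness. (Equivalently, one argues directly: the critical points of $h_u$ are the eigenlines of the Hermitian matrix $-iu$, and if $V_\mu$ is a $\mu$-eigenspace then general position forces at most $\dim V_\mu$ of the points of $g_0D$ to lie in $V_\mu$, so the number of points of $g_0D$ that are eigenlines is at most $\sum_\mu\dim V_\mu=n+1<n+2$ unless $-iu$ is scalar.) This gives the invertibility and completes the argument.

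The one point that actually needs care is the identification of the base point in the first paragraph: one must normalise the point-mass term and $\lambda_t$ consistently so that $\tilde F_s$ is traceless and its $s\to 0$ limit is precisely the balancing equation for $D$ that $g_0$ solves. Granting that, the rest is a rerun of the openness proposition, with the hypotheses used there to force $u=0$ (discreteness of $\mathrm{Aut}(X)$ together with Hypothesis \ref{hyp:hash}) replaced by the elementary rigidity of projective frames.
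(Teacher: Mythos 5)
Your proposal is correct and follows essentially the same route as the paper: reparametrise by $s=1/t$, rescale so the equation extends across $s=0$, take the balanced embedding of $D$ from Lemma \ref{lem:stablept}(ii) as the base point, and run the implicit function theorem. The one place you add genuine content is the invertibility check at $s=0$: the paper simply says ``by the same argument'', but as you observe, at $s=0$ the $X$-integral disappears, so the injectivity cannot come from discreteness of $\mathrm{Aut}(X)$ and Hypothesis \ref{hyp:hash} as in the openness proposition --- it must instead come from the rigidity of a projective frame (the eigenline count forces $u$ to be scalar, hence zero by tracelessness), and your argument for this is correct. Your remark about normalising the point-mass term so that $\tilde F_s$ is genuinely traceless is also well taken, since as written the trace of $F_t$ only vanishes if $\int_X\omega_{FS}^m=n+2$.
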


\begin{proof}
	By Lemma \ref{lem:stablept}, $D$ has a balanced embedding $g$.
	Consider the map \[f:(g,s)\mapsto si\int_{gX}\frac{z_i\bar z_j}{|z|^2}\omega_{FS}^m+i\int_{gD}\frac{z_i\bar z_j}{|z|^2}-\lambda_s \id.\]
	By assumption, $f(0)=0$.
	By the same argument using the implicit function theorem, we can show that for each $s$ in a neighbourhood of $0$, there is a $g_s\in GL_{n+1}\C$ such that $f(g_s,s)=0$.
	Pick any such $s>0$. Then $F_{1/s}(g_s)=0$, so $I\neq \varnothing.$
\end{proof}

We shall now prove the main theorem.
\begin{proof}[Proof of Theorem ~\ref{thm:main}]
	This is equivalent to saying that $0\in I$. By the previous propositions, it suffices to show that if $(t_j)$ is a decreasing sequence in $I$ converging to $t_{min}$, then $t_{min}\in I$.	

	By definition of $I$, for each $j$, there exists $g_j\in GL_{n+1}(\C)$ such that $F_{t_j}(g_j)=0$.
	By properness of the Hilbert scheme, we obtain limits $X_{min}=\lim_{j}g_jX$ and $D_{min}=\lim_{j}g_jD$ for some schemes $X_{min}$ and $D_{min}$.
	As $j\to\infty$, 
	\[F_{t_j}(g_j)\to i\int_{X_{min}}\frac{z_i\bar z_j}{|z|^2}\omega_{FS}^m+it_{min}\int_{D_{min}}\frac{z_i\bar z_j}{|z|^2}-\lambda_{t_{min}}\id,\] so

	\begin{equation}\label{eq:bal}
		i\int_{X_{min}}\frac{z_i\bar z_j}{|z|^2}\omega_{FS}^m+it_{min}\int_{D_{min}}\frac{z_i\bar z_j}{|z|^2}-\lambda_{t_{min}}\id=0.
	\end{equation}

	We claim that there is a one parameter subgroup $\rho:\C^*\hookrightarrow SL_{n+1}(\C)$ such that $\lim_{s\to 0}\rho(s)X=X_{min}$ and $\lim_{s\to 0}\rho(s)D=D_{min}.$
	By applying \cite[proposition 1]{donaldson2010stability} to the nested Hilbert scheme $N$ containing $(X,D)$, we just need to show that \[Aut(X_{min},D_{min})=\{g\in SL_{n+1}(\C):gX_{min}=X_{min}, gD_{min}=D_{min}\}\] is reductive.
	This was a key idea in \cite[Section 5]{chen_donaldson_sun_2014}.
	\begin{claim*}
		The automorphism group $Aut(X_{min},D_{min})$ is reductive.
	\end{claim*}
	There are multiple ways of proving this, all of which seem to rely on an application of Hypothesis \ref{hyp:hash} to $X_{min}.$ The clearest way of seeing this is the following.
	Firstly, we claim that the existence of the balanced embedding for $(X_{min},D_{min})$ implies Chow polystability of $(X_{min},D_{min})$. This relies on convexity arguments, and as in \cite[Lemma 7.19]{szekelyhidi_2014}, the key point is to use Hypothesis \ref{hyp:hash} to obtain strict convexity and hence Chow polystability. Then, viewing $(X_{min},D_{min})$ as a point in the nested Hilbert scheme, it is a standard fact that polystable points have reductive stabiliser, proving the claim \cite[Section 2]{MSMF_1973__33__81_0}.

	By changing coordinates, we can assume that $\rho(s)$ is diagonal with entries $t^{\lambda_0},t^{\lambda_1},\dots,t^{\lambda_n}$.
	By setting $i=j$ in equation \ref{eq:bal}, multiplying it by $\lambda_j$ and then summing over $j$, we get
	\begin{equation}\label{eq:ham}
		\sum_{j=0}^n i\int_{X_{min}}\frac{\lambda_j|z_j|^2}{|z|^2}\omega_{FS}^m+\sum_{j=0}^n it_{min}\int_{D_{min}}\frac{\lambda_j|z_j|^2}{|z|^2}=0.
	\end{equation}
	By the Chow stability of $X$ and $D$, equation \ref{eq:ham} implies that $X_{min}=X$ and $D_{min}=D$. By equation \ref{eq:bal}, this implies that $t_{min}\in I$, as desired.
\end{proof}

\begin{rmk}
	It is essential that we apply Hypothesis \ref{hyp:hash} to $X_{min}$ for our arguments.
\end{rmk}

\bibliography{research}

\begin{thebibliography}{}

\bibitem[Aoi et~al., 2021]{aoi2021uniform}
Aoi, T., Hashimoto, Y., and Zheng, K. (2021).
\newblock On uniform log $k$-stability for constant scalar curvature {K}\"ahler
  cone metrics.

\bibitem[Chen and Cheng, 2021]{chen_cheng_2021}
Chen, X. and Cheng, J. (2021).
\newblock On the constant scalar curvature {Kähler} metrics (i)—a priori
  estimates.
\newblock {\em Journal of the American Mathematical Society}, 34(4):909–936.

\bibitem[Chen et~al., 2014]{chen_donaldson_sun_2014}
Chen, X., Donaldson, S., and Sun, S. (2014).
\newblock {Kähler-Einstein} metrics on {Fano} manifolds. iii: Limits as cone
  angle approaches \boldmath2 and completion of the main proof.
\newblock {\em Journal of the American Mathematical Society}, 28(1):235–278.

\bibitem[Donaldson, 2010]{donaldson2010stability}
Donaldson, S. (2010).
\newblock Stability, birational transformations and the {Kahler-Einstein}
  problem.

\bibitem[Hashimoto, 2020]{hashi2020}
Hashimoto, Y. (2020).
\newblock Mapping properties of the {H}ilbert and {F}ubini–{S}tudy maps in
  {K}\"ahler geometry.
\newblock {\em Annales de la Facult\'e des sciences de Toulouse:
  Math\'ematiques}, 29(2):371–389.

\bibitem[Lempert, 2021]{lempert2021bergman}
Lempert, L. (2021).
\newblock On the {B}ergman kernels of holomorphic vector bundles.

\bibitem[Luna, 1973]{MSMF_1973__33__81_0}
Luna, D. (1973).
\newblock Slices \'etales.
\newblock In {\em Sur les groupes alg\'ebriques}, number~33 in M\'emoires de la
  Soci\'et\'e Math\'ematique de France, pages 81--105. Soci\'et\'e
  math\'ematique de France.

\bibitem[Luo, 1998]{luo_1998}
Luo, H. (1998).
\newblock Geometric criterion for {Gieseker-Mumford} stability of polarized
  manifolds.
\newblock {\em Journal of Differential Geometry}, 49(3).

\bibitem[Mukai, 2003]{mukai_2003}
Mukai, S. (2003).
\newblock {\em An Introduction to Invariants and Moduli}.
\newblock Cambridge Studies in Advanced Mathematics. Cambridge University
  Press.

\bibitem[Phong and Sturm, 2004]{phong_sturm_2004}
Phong, D.~H. and Sturm, J. (2004).
\newblock Scalar curvature, moment maps, and the {Deligne} pairing.
\newblock {\em American Journal of Mathematics}, 126(3):693–712.

\bibitem[Szekelyhidi, 2014]{szekelyhidi_2014}
Szekelyhidi, G. (2014).
\newblock {\em An introduction to {Extremal} {Kähler} metrics}.
\newblock American Mathematical Society.

\bibitem[Wang, 2004]{wang_2004}
Wang, X. (2004).
\newblock Moment map, {Futaki} invariant and stability of projective manifolds.
\newblock {\em Communications in Analysis and Geometry}, 12(5):1009–1238.

\bibitem[Zhang, 1996]{zhang_1996}
Zhang, S. (1996).
\newblock Heights and reductions of semi-stable varieties.
\newblock {\em Compositio Mathematica}, 104:77–105.

\end{thebibliography}
\bibliographystyle{apalike}

\end{document}